\documentclass[12pt]{amsart}
\usepackage[utf8]{inputenc}
\usepackage[T1]{fontenc}
\usepackage{amsmath,amssymb,amsthm}
\usepackage{mathrsfs}
\usepackage{xcolor}
\usepackage{enumitem}

\definecolor{sectioncolor}{rgb}{0,0,0}
\definecolor{definitioncolor}{rgb}{0,0,0}

\newtheorem{theorem}{Theorem}[section]
\newtheorem{lemma}[theorem]{Lemma}

\theoremstyle{definition}

\theoremstyle{remark}
\newtheorem*{acknowledgment}{Acknowledgments}

\makeatletter
\renewcommand{\section}{\@startsection{section}{1}{\z@}%
  {-3.5ex \@plus -1ex \@minus -.2ex}%
  {2.3ex \@plus.2ex}%
  {\normalfont\large\bfseries\color{sectioncolor}}}
\makeatother

\subjclass[2020]{11N37}

\begin{document}

\title[On the asymptotic behavior of the integral $T(x)$]{On the asymptotic behavior of the integral $\displaystyle T(x)=\int_{0}^{+\infty}\left|\left\{\frac{x}{t}\right\}-\left\{\frac{x}{t+1}\right\}\right|\,dt$}

\author{Mihoub BOUDERBALA}
\address{(1) Khemis Miliana University (UDBKM), FIMA Laboratory, Faculty of Matter Sciences and Computer Sciences, Rue Thniet El Had, Khemis Miliana, 44225, Ain Defla Province, Algeria}
\email{mihoub75bouder@gmail.com}

\author{Meselem KARRAS}
\address{(2) Tissemsilt University, Faculty of Science and Technology, FIMA Laboratory, Khemis Miliana, Algeria}
\email{m.karras@univ-tissemsilt.dz}

\maketitle

\begin{abstract}
In this paper, we estimate the integral $T(x)$ mentioned in the title, where $\{t\}$ denotes the fractional part of the real number $t$, and $x$ is any positive real number.
\end{abstract}

\bigskip

\noindent\textbf{Keywords.} Asymptotic behavior, fractional part, integral, divisor function.

\bigskip

\section{Introduction and Results}

The study of sums involving fractional parts aroused great interest among many researchers, particularly in the late 19th century. One of the key results of this period is the asymptotic formula for the sum of fractional parts, derived from Dirichlet's seminal work \cite{Apostol1976} on the divisor function $\tau(n)$,
\[
\sum_{n\le x}\left\{\frac{x}{n}\right\}=(1-\gamma)x+O\!\left(x^{1/2}\right),
\tag{1}
\]
where $\gamma$ is the Euler--Mascheroni constant. This result paved the way for new explorations of these sums, and insights were gained into approximate formulas and error bound estimates for sums involving fractional parts. Recently, the refinement of error terms for (1) has been considerably advanced by Bourgain \cite{BourgainWatt2017}; he showed
\[
\sum_{n\le x}\left\{\frac{x}{n}\right\}=(1-\gamma)x+O\!\left(x^{517/1648+\varepsilon}\right),
\]
where $\varepsilon>0$ is arbitrarily small.

The study of the variation between two successive terms, $\bigl\{x/n\bigr\}$ and $\bigl\{x/(n+1)\bigr\}$, was initially carried out by Aurel Wintner \cite{Wintner1946}. Later, Balazard in \cite{Balazard2017}, established the following result
\[
\sum_{n\ge1}\left|\left\{\frac{x}{n}\right\}-\left\{\frac{x}{n+1}\right\}\right|=\frac{2}{\pi}\zeta(3/2)\sqrt{x}+O\!\left(x^{2/5}\right),\qquad(x>0).
\tag{2}
\]
Balazard et al. \cite{BalazardBenferhatBouderbala2021} extended the result (2) by studying a more general case. They established the following asymptotic formula
\[
\sum_{n\ge0}\left|\left\{\frac{x}{n+a}\right\}-\left\{\frac{x}{n+b}\right\}\right|=\frac{2}{\pi}\zeta(3/2)\sqrt{cx}+O\!\left(c^{2/9}x^{4/9}\right),
\tag{3}
\]
where $b>a>0$, $c=b-a$, and the result holds uniformly for $x\ge40c^{-5}(1+b)^{27/2}$.

The connection between sums and integrals has been a fundamental topic, with the Riemann--Stieltjes integral serving as a particularly powerful tool in this context. One of its key advantages lies in its simplicity, as demonstrated by the following result:
\[
\sum_{\alpha<n\le\beta}f(n)=\int_{\alpha}^{\beta}f(t)\,dt+O\bigl(|f(\alpha)|+|f(\beta)|\bigr),
\]
where $\alpha,\beta\in\mathbb{Z}$ and $f$ is a real-valued monotonic function on $[\alpha,\beta]$. If the function $f$ is not monotonic, the problem poses some difficulties. In this case, Wu and Shi \cite{WuShi2020} focused on refining the error term in the difference:
\[
\sum_{n\le x}f(n)\left\{\frac{x}{n}\right\}^{k}-\int_{1}^{x}f(t)\left\{\frac{x}{t}\right\}^{k}dt.
\]
Building on this, Mercier and Nowak \cite{MercierNowak1985} derived the following bound:
\[
\sum_{n\le x}f(n)\left\{\frac{x}{n}\right\}^{k}=\int_{1}^{x}f(t)\left\{\frac{x}{t}\right\}^{k}dt+O\!\left(f(x)x^{131/416}(\log x)^{26947/8320}\right),
\]
where $f(t)$ is a real-valued, positive, nondecreasing function defined for $t\ge1$, and $k$ is an arbitrary positive integer.

Motivated by this result, we explore the asymptotic behavior of the following integral.
\[
T(x)=\int_{0}^{+\infty}\left|\left\{\frac{x}{t}\right\}-\left\{\frac{x}{t+1}\right\}\right|\,dt.
\]
We now state our principal result.

\begin{theorem}
For all $x>0$, we have
\[
T(x)=\frac{2}{\pi}\zeta(3/2)\sqrt{x}+O\!\left(x^{13/30}\right),
\tag{4}
\]
where $\zeta$ denotes the Riemann zeta function.
\end{theorem}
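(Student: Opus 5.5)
Since (3) is uniform in $a$ with $c=b-a=1$, the natural plan is to write $T(x)$ as an integral over a shift parameter of the sums already controlled by (3). For $t>0$ write $t=n+u$ with $n\ge0$ an integer and $u\in[0,1)$; then
\[
T(x)=\int_0^1 S(x,u)\,du,\qquad S(x,u)=\sum_{n\ge0}\left|\left\{\frac{x}{n+u}\right\}-\left\{\frac{x}{n+u+1}\right\}\right|,
\]
by Tonelli, since all terms are nonnegative. The inner sum is exactly the left side of (3) with $a=u$, $b=u+1$, $c=1$. The catch is that (3) is only asserted for $x\ge 40c^{-5}(1+b)^{27/2}$ and $a>0$, and its error $O(x^{4/9})$ is worse than $x^{13/30}$ (since $4/9>13/30$). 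Averaging (3) directly therefore gives the main term but not the claimed error, so the proof will have to redo the argument of (3) while exploiting the extra average over $u$.

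I would proceed as follows. \emph{Step 1 (small $t$):} first isolate the range $0<t\le t_0$ with a parameter $t_0$ to be chosen later. Since the integrand is at most $1$, this range contributes $O(t_0)$. The point of the cut is that near $t=0$ the shifts $u\in(0,1)$ are not bounded away from $0$, so (3) is not uniform there; the trivial bound sidesteps this. \emph{Step 2 (large $t$):} for $t\ge \sqrt{x}\,\lambda$ with $\lambda$ large, we have $x/t-x/(t+1)=x/(t(t+1))<1$. Hence the two fractional parts differ by exactly $x/(t(t+1))$ unless an integer lies in $(x/(t+1),x/t]$, in which case they differ by $1-x/(t(t+1))$. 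This is the same structure as in Balazard's proof of (2), and the integral over this range can be written as
\[
\int \frac{x}{t(t+1)}\,dt \;+\; \sum_{m}\int_{x/(m+1)-1... }\Bigl(1-\frac{2x}{t(t+1)}\Bigr)\,dt,
\]
i.e.\ an explicit sum over integers $m\le \sqrt{x}/\lambda$ of integrals over the $t$-intervals $\{t: x/(t+1)<m\le x/t\}$. \emph{Step 3 (middle range):} in $t_0<t<\lambda\sqrt{x}$, I would write $\{y\}=y-\tfrac12-\psi(y)$ with the sawtooth $\psi$. The term $\bigl|\,x/(t(t+1))-\lfloor x/t\rfloor+\lfloor x/(t+1)\rfloor\,\bigr|$ depends only on the distance-to-integer structure. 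Following Balazard et al., I would express it through the positions of $x/t$ modulo $1$ and reduce the error to integrals of $\psi$ of the form $\int\psi(x/t)\,g(t)\,dt$.

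For these integrals the integration in $t$ is the decisive gain. Integrating by parts, $\int \psi(x/t)g(t)\,dt$ is bounded by the second-derivative (van der Corput-type) estimate for integrals instead of exponential sums. Concretely, $\int e(hx/t)\,dt\ll t^{3/2}/(hx)^{1/2}$, which is much stronger than the discrete exponential-sum bounds responsible for the exponent $4/9$. The main term $\frac{2}{\pi}\zeta(3/2)\sqrt{x}$ should arise, as in (2), from summing $\int |1-2\{\cdot\}|$-type contributions over $m$. The calculation mirrors Balazard's evaluation $\sum_m \sqrt{x}\,m^{-3/2}\cdot(\text{const})$: the $t$-length of the $m$-th interval is about $\sqrt{x}/m^{3/2}$ after the substitution $t\approx\sqrt{x/m}$, and the constant is $2/\pi$.

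The main obstacle is balancing the parameters. The cut $t_0$, the threshold $\lambda\sqrt{x}$, and the Fourier truncation length $H$ in the Vaaler/Erdős–Turán approximation of $\psi$ each contribute error terms of shapes such as $t_0$, $\sqrt{x}/\lambda$, $\sqrt{x}\,\lambda^{a}H^{-1}$, and $H^{1/2}x^{\beta}$. The exponent $13/30$ should come out of optimizing these. I would first try $\lambda=x^{1/15}$ and $H=x^{2/15}$-type choices. If the bound for the $\psi$-integrals is too weak near $t\asymp\sqrt{x}$, where the phase $x/t$ has small second derivative relative to the interval length, I would split dyadically in $t$ and apply the first-derivative test there instead of the second-derivative test.
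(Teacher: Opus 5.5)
\textbf{There is a genuine gap: the proposal does not prove the $x^{13/30}$ bound.}

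The only step you actually carry out is the averaging $T(x)=\int_0^1 S(x,u)\,du$ combined with (3). As you note yourself, this gives only $O(x^{4/9})$. Incidentally, (3) is uniform for $u\in(0,1)$: with $c=1$ and $b=1+u<2$, its hypothesis is just $x\ge 40\cdot 3^{27/2}$, so the worry motivating Step~1 is unfounded.

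Everything after that is a programme rather than a proof:
\begin{itemize}
\item You never say how $\bigl|\{x/t\}-\{x/(t+1)\}\bigr|$ is turned into integrals $\int\psi(x/t)g(t)\,dt$.
\item The error shapes $\sqrt{x}\lambda^{a}H^{-1}$ and $H^{1/2}x^{\beta}$ have undetermined exponents.
\item The choices $\lambda=x^{1/15}$ and $H=x^{2/15}$ are read off from the target exponent rather than derived.
\end{itemize}
So nothing in the proposal actually produces $x^{13/30}$.

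In the paper the exponent has a different, purely elementary origin. One fixes $d=\lfloor x/t\rfloor-\lfloor x/(t+1)\rfloor$ and $k=\lfloor x/t\rfloor$. On each explicit interval $I(k-d,k;x)$ the integrand is $|d-x/(t(t+1))|$. Its sign is constant except on the single interval $k=K_d$, which contains $N_d$. Hence everything integrates in closed form, giving $T_0(x)=\frac23\sqrt{x}+O(1)$ and $T_d(x)=f(d)\sqrt{x}+O(d^2)$. The tail bound $\sum_{d>D}T_d(x)\le\sqrt{x/(D-1)}$ is trivial, and $D=x^{2/15}$ balances $O(D^3)$ against $O(\sqrt{x/D})=O(x^{13/30})$. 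No exponential sums or integrals enter.

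\textbf{What your analytic route is missing.} The missing idea is an exact linearization of the absolute value, valid for every $t>0$, so that no separate Steps~2 and~3 are needed. With $\theta(t)=\{x/(t(t+1))\}$ and $\psi(u)=\{u\}-\tfrac12$,
\[
\Bigl|\Bigl\{\frac{x}{t}\Bigr\}-\Bigl\{\frac{x}{t+1}\Bigr\}\Bigr|
=2\theta(1-\theta)+(1-2\theta)\Bigl(\psi\Bigl(\frac{x}{t+1}\Bigr)-\psi\Bigl(\frac{x}{t}\Bigr)\Bigr).
\]
After the substitution $s=x/(t(t+1))$, the first term integrates to $\sqrt{x}\int_0^\infty\{s\}(1-\{s\})s^{-3/2}\,ds+O(1)=\frac2\pi\zeta(3/2)\sqrt{x}+O(1)$.

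The weight $w=1-2\theta$ jumps exactly at the points $t=N_d$, where $t(t+1)=x/d$. Your sketch never confronts this obstruction, and it is why the paper needs $K_d$, $N_d$ and the pieces $T_{d,3}$. Shift $t\mapsto t-1$ in the $\psi(x/(t+1))$ term. The error then becomes $\int_2^\infty\bigl(w(u-1)-w(u)\bigr)\psi(x/u)\,du+O(1)$. Here $w(u-1)-w(u)$ equals $-4x/((u-1)u(u+1))$ plus twice the number of $d$ with $u\in(N_d,N_d+1]$.

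After $v=x/u$, one integration by parts against the bounded primitive of $\psi$ gives two bounds. The smooth part is $O(1)$, and each piece $(N_d,N_d+1]$ is $O(1/d)$. Summing over $d\le x/2$ gives an error $O(\log x)$, far better than $x^{13/30}$, with no Vaaler truncation or van der Corput estimates. None of this is in your proposal, which as written does not prove the theorem.
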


Certain formulas from Balazard's original article \cite{Balazard2017} are repeated here to ensure the clarity and coherence of our presentation.

\section{Proof of Theorem 1}

Let $x>1$ be a real number. We define the function
\[
T(x)=\int_{0}^{+\infty}\left|\left\{\frac{x}{t}\right\}-\left\{\frac{x}{t+1}\right\}\right|\,dt.
\]
This function can also be rewritten as
\[
T(x)=\int_{0}^{+\infty}\left|\left\lfloor\frac{x}{t}\right\rfloor-\left\lfloor\frac{x}{t+1}\right\rfloor-\frac{x}{t(t+1)}\right|\,dt.
\]
By setting $k=\bigl\lfloor x/t\bigr\rfloor$ and $h=\bigl\lfloor x/(t+1)\bigr\rfloor$, it follows that $t$ belongs to the interval
\[
t\in I(h,k;x)=\left(\frac{x}{k+1},\frac{x}{k}\right]\cap\left(\frac{x}{h+1}-1,\frac{x}{h}-1\right],
\tag{5}
\]
where $k\ge h\ge0$. As a result, the function $T(x)$ can be expressed as the sum
\[
T(x)=\sum_{k\ge h\ge0}\left(\int_{t\in I(h,k;x)}\left|k-h-\frac{x}{t(t+1)}\right|\,dt\right).
\]
Now, for a positive integer $d$, we define the set
\[
E_{d}=\bigl\{(h,k)\in\mathbb{N}^{2}:0\le h\le k,\ k-h=d\bigr\}
=\bigl\{(k-d,k):k\in\mathbb{N},\ d\le k\bigr\}.
\tag{6}
\]
We also introduce the auxiliary function
\[
T_{d}(x)=\sum_{(h,k)\in E_{d}}\left(\int_{t\in I(h,k;x)}\left|k-h-\frac{x}{t(t+1)}\right|\,dt\right)
=\sum_{d\le k}\left(\int_{t\in I(k-d,k;x)}\left|d-\frac{x}{t(t+1)}\right|\,dt\right).
\tag{7}
\]
Thus, we obtain the final expression
\[
T(x)=\sum_{d\ge0}T_{d}(x).
\]
To derive an asymptotic formula for the sum $T(x)$ defined by the previous expression, we introduce a large positive real number $D$, and represent the sum $T(x)$ as follows:
\[
T(x)=T_{0}(x)+\sum_{1\le d\le D}T_{d}(x)+\sum_{d>D}T_{d}(x).
\]
In our next plan, we will study separately the three partial sums $T_{0}(x)$, $\sum_{1\le d\le D}T_{d}(x)$ and $\sum_{d>D}T_{d}(x)$ by finding their asymptotic formulas, ultimately obtaining an asymptotic formula for sum $T(x)$.

\subsection{Exploring the behavior of the sum $T_{0}(x)$}

\begin{lemma}
For all $x>0$, we have
\[
T_{0}(x)=\frac{2}{3}\sqrt{x}+O(1).
\tag{8}
\]
\end{lemma}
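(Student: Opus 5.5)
The plan is to compute $T_0(x)$ exactly, interval by interval, and then sum the resulting closed form with an elementary Euler--Maclaurin type estimate. For $d=0$ we have $h=k$, and the integrand in (7) is simply
\[
\frac{x}{t(t+1)}=\frac{x}{t}-\frac{x}{t+1}.
\]
The set $I(k,k;x)$ in (5) is
\[
\left(\frac{x}{k+1},\frac{x}{k}\right]\cap\left(\frac{x}{k+1}-1,\frac{x}{k}-1\right]=\left(\frac{x}{k+1},\frac{x}{k}-1\right],
\]
with the convention $x/0=+\infty$. For $k\ge1$ this interval is nonempty iff $x/k-x/(k+1)>1$, i.e. $k(k+1)<x$. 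So
\[
T_0(x)=\sum_{k\ge0,\ k(k+1)<x}\int_{x/(k+1)}^{x/k-1}\frac{x\,dt}{t(t+1)} .
\]
An antiderivative of the integrand is $x\log\bigl(t/(t+1)\bigr)$, so each term can be evaluated in closed form.

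\emph{The term $k=0$.} The interval is $(x,\infty)$, and the integral equals $x\log(1+1/x)\le 1$.

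\emph{The terms $k\ge1$.} With $a=x/k-1$ and $b=x/(k+1)$ we get $a/(a+1)=(x-k)/x$ and $b/(b+1)=x/(x+k+1)$. Hence the $k$-th term equals
\[
x\log\frac{(x-k)(x+k+1)}{x^{2}}=x\log\Bigl(1+\frac{x-k(k+1)}{x^{2}}\Bigr).
\]
In the range $k(k+1)<x$ the argument $u=(x-k(k+1))/x^2$ lies in $[0,1/x)$. Using $\log(1+u)=u+O(u^2)$, the term is
\[
1-\frac{k(k+1)}{x}+O\!\left(\frac1x\right)
\]
uniformly in $k$.

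\emph{Summation.} Let $K$ be the largest integer with $K(K+1)<x$, so $K=\sqrt{x}+O(1)$. The accumulated error is $O(K/x)=O(1)$. The main part is
\[
\sum_{k\le K}\Bigl(1-\frac{k(k+1)}{x}\Bigr)=K-\frac{K(K+1)(K+2)}{3x}.
\]
Substituting $K=\sqrt{x}-\tfrac12+O(1/\sqrt x)$ gives $\sqrt x-\tfrac13\sqrt x+O(1)=\tfrac23\sqrt x+O(1)$.

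The only delicate point is this last step. The cubic term must be expanded carefully so that the $O(1)$ shifts in $K$ contribute only $O(1)$. This works because $f(k)=1-k(k+1)/x$ vanishes at the cutoff $k(k+1)=x$, so the boundary terms are bounded. Finally, for $0<x\le1$ the sum reduces to the $k=0$ term, which is $O(1)$, so (8) holds for all $x>0$.
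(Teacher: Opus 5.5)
Your proof is correct and follows the paper's route: split $T_0$ over the intervals $\bigl(x/(k+1),x/k-1\bigr]$ for $k\le K$, evaluate each integral, expand, and sum using $K=\sqrt{x}+O(1)$. The one difference is that you use the exact antiderivative to get the closed form $x\log\bigl(1+(x-k(k+1))/x^{2}\bigr)$, where the paper Taylor-expands $1/(t(t+1))$ in powers of $1/t$; your version is cleaner and gives a uniform $O(1/x)$ error per term directly.

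There is one small slip. $K=\sqrt{x}-\tfrac12+O(1/\sqrt{x})$ is false, since $K$ is an integer. However, $K=\sqrt{x}+O(1)$ already gives $K-K(K+1)(K+2)/(3x)=\tfrac23\sqrt{x}+O(1)$, so nothing in the argument is affected and the final step is not delicate.
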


\begin{proof}
We note that in the case where $d=0$, we may have $h=k$, which leads to
\[
t\in I(k,k;x)=\left(\frac{x}{k+1},\frac{x}{k}-1\right]
\quad\text{(see (5)).}
\]
We note that the interval $I(k,k;x)$ is non empty if and only if $k(k+1)\le x$. Let us denote by $K=K(x)$ the greatest integer $k$ such that $k(k+1)\le x$, i.e.,
\[
K=\bigl\lfloor\sqrt{x+1/4}-1/2\bigr\rfloor.
\]
Notice that $K=\sqrt{x}+O(1)$ and $K\ll\sqrt{x}$.

So, according to (7), we obtain the following expression for $T_{0}(x)$, i.e.,
\[
T_{0}(x)=\sum_{0\le k\le K}\int_{x/(k+1)}^{x/k-1}\frac{x}{t(t+1)}\,dt
=x\sum_{0\le k\le K}\int_{x/(k+1)}^{x/k-1}\left(\frac{1}{t^{2}}-\frac{1}{t^{3}}+O\!\left(\frac{1}{t^{4}}\right)\right)dt.
\]
Simplifying this expression, we get
\[
T_{0}(x)=x\sum_{0\le k\le K}\left(\frac{1}{x}-\frac{k^{2}}{x^{2}}+\frac{2k+1}{2x^{2}}+O\!\left(\frac{(k+1)^{3}}{x^{3}}\right)\right).
\tag{9}
\]
To explore the behavior of $T_{0}(x)$, we begin by noting that
\[
x\sum_{0\le k\le K}\frac{1}{x}=K+1=\sqrt{x}+O(1).
\tag{10}
\]
Moreover, we have on the other hand
\[
x\sum_{0\le k\le K}\left(-\frac{k^{2}}{x^{2}}+\frac{2k+1}{2x^{2}}+O\!\left(\frac{(k+1)^{3}}{x^{3}}\right)\right)
=-\frac{K^{3}+O(K^{2})}{3x}+O\!\left(\frac{K^{2}}{x}\right)+O\!\left(\frac{K^{4}}{x^{2}}\right)\]
\[=-\frac{1}{3}\sqrt{x}+O(1).
\tag{11}
\]
The conjunction of (9), (10) and (11) gives (8).
\end{proof}

\subsection{Exploring the behavior of the sums $T_{d}(x)$ for $d\ge1$}

The key to this step of the proof is first to specify the intervals $I(h,k;x)$ mentioned in (5) without using intersections. Notably, if $k>h\ge0$ and $x>0$, we have
\[
\frac{x}{k}\le\frac{x}{h}-1\iff kh\le(k-h)x.
\]
In particular, this leads to the implications
\[
\frac{x}{k+1}\le\frac{x}{h+1}-1\implies\frac{x}{k}\le\frac{x}{h}-1
\quad\text{and}\quad
\frac{x}{k}>\frac{x}{h}-1\implies\frac{x}{k+1}>\frac{x}{h+1}-1.
\]
These observations naturally lead us to consider only three distinct intervals, systematically linked to three distinct parts of the set $E_{d}$ defined in (6). So, for each particular interval, we associate its corresponding subset, denoted as $E_{d,i}$ $(i=1,2,3)$, knowing that $h=k-d$.

In the case where $\frac{x}{k+1}\le\frac{x}{h+1}-1$, we obtain
\[
I(k-d,k;x)=\left(\frac{x}{k-d+1}-1,\frac{x}{k}\right],
\tag{12}
\]
and
\[
E_{d,1}=\bigl\{(k-d,k):d\le k,\ (k-d+1)(k+1)\le xd\bigr\}.
\]
In the case where $\frac{x}{k}>\frac{x}{h}-1$, we obtain
\[
I(k-d,k;x)=\left(\frac{x}{k+1},\frac{x}{k-d}-1\right],
\tag{13}
\]
and
\[
E_{d,2}=\bigl\{(k-d,k):d\le k,\ (k-d)k>xd\bigr\}.
\]
Moreover, when
\[
I(k-d,k;x)=\left(\frac{x}{k+1},\frac{x}{k}\right],
\tag{14}
\]
we obtain
\[
E_{d,3}=\bigl\{(k-d,k):d\le k,\ (k-d)k\le xd<(k-d+1)(k+1)\bigr\}.
\]
Now, for $x>0$ and $k>d\ge0$, we define $K_{d}=K_{d}(x)$ as the largest integer $k$ satisfying $(k-d)k\le dx$. So, we have $K_{0}=0$, and in general
\[
K_{d}=\left\lfloor\frac{d+\sqrt{d^{2}+4dx}}{2}\right\rfloor.
\]
The use of the function $K_{d}$ allows us to give fluidity to the quadratic conditions associated with the number $k$, which contribute to the definition of the subsets $E_{d,i}$ $(i=1,2,3)$, i.e.,
\begin{align}
E_{d,1}&=\bigl\{(k-d,k):d\le k,\ k\le K_{d}-1\bigr\},
\tag{15}\\
E_{d,2}&=\bigl\{(k-d,k):d\le k,\ k>K_{d}\bigr\},
\tag{16}\\
E_{d,3}&=\bigl\{(k-d,k):d\le k,\ k=K_{d}\bigr\}.
\tag{17}
\end{align}
Summarising the above results allows us to write $T_{d}(x)$ defined in (7), as follows
\[
T_{d}(x)=T_{d,1}(x)+T_{d,2}(x)+T_{d,3}(x),
\]
where
\[
T_{d,i}(x)=\sum_{(k-d,k)\in E_{d,i}}\left(\int_{t\in I(k-d,k;x)}\left|d-\frac{x}{t(t+1)}\right|\,dt\right)
\qquad(i=1,2,3).
\tag{18}
\]
In the process of finding asymptotic formulas for auxiliary functions $T_{d,i}(x)$ $(i=1,2,3)$, we must first consider how the absolute value is omitted in the expression of each function. For this, we define the function $N_{d}=N_{d}(x)$ as the largest real number verifying $t(t+1)\le x/d$, such that $d>0$. This leads to the following expression
\[
N_{d}=\frac{-1+\sqrt{1+4x/d}}{2}.
\]
The number $N_{d}$ obtained can be expressed in relation to the number $K_{d}$ according to \cite[Proposition~4, p.~10]{Balazard2017}, as follows
\[
\frac{x}{K_{d}+1}\le N_{d}\le\frac{x}{K_{d}},
\tag{19}
\]
in order to pave the way for the upcoming work.

We will now find the final expressions of the auxiliary functions $T_{d,i}(x)$ $(i=1,2,3)$, which will then allow us to find the expression of the function $T_{d}(x)$, and finally work on determining its asymptotic formula. Throughout paragraphs 2.2.1 to 2.2.4, let us note that $d>0$.

\subsubsection{Obtaining the expression of the sum $T_{d,1}(x)$}
Firstly, it is important to note that the definition of the sum $T_{d,1}(x)$, given by formula (18) for $i=1$, is linked to the set $E_{d,1}$ defined in (15), as well as to the interval given in (12). The latter is non-empty if and only if $\frac{x}{k-d+1}-1<\frac{x}{k}$, or, in other words, if and only if $k>K_{d}-1$. Moreover, we know that if $k\le K_{d}-1$, it follows from (19) and the definition of $K_{d}$, that the variable $t$ satisfies the inequality
\[
t\ge\frac{x}{K_{d}-d}-1\ge\frac{x}{K_{d}}\ge N_{d}.
\]
Therefore,
\[
T_{d,1}(x)=\sum_{K_{d}-1<k\le K_{d}-1}\int_{x/k}^{x/(k-d+1)-1}\left(d-\frac{x}{t(t+1)}\right)dt\]
\[=\sum_{K_{d}-1<k\le K_{d}-1}\left(d\left(\frac{k}{x}-\frac{k-d+1}{x}+1\right)+x\log\frac{1-(k-d+1)/x}{1+k/x}\right).
\]
We continue, this time focusing on the telescopic identity
\[
\sum_{a<k\le b}(u_{k}-u_{k-r})=\sum_{b-r<k\le b}u_{k}-\sum_{a-r<k\le a}u_{k}.
\]
This identity will also be applied in the following paragraphs.

So, we find that
\begin{equation}
\begin{aligned}
T_{d,1}(x)=&\;d\sum_{K_{d}-d<k\le K_{d}-1}\frac{x}{k}
-d\sum_{K_{d}-1-d+1<k\le K_{d}-1}\frac{x}{k}
+d(K_{d}-K_{d-1}-1)\\
&+x\sum_{K_{d}-1<k\le K_{d}-1}\log\frac{1-(k-d+1)/x}{1+k/x}.
\end{aligned}
\tag{20}
\end{equation}

\subsubsection{Obtaining the expression of the sum $T_{d,2}(x)$}
We recall that the sum $T_{d,2}(x)$, defined by formula (18) for $i=2$, is associated with the set $E_{d,2}$ introduced in (16), as well as with the interval specified in (13). The latter is non-empty if and only if $\frac{x}{k+1}\le\frac{x}{k-d}-1$, which is equivalent to the condition $k\le K_{d+1}-1$. Furthermore, when $k>K_{d}$, it follows from (19), and the definition of $K_{d}$, that the variable $t$ satisfies the inequality
\[
t\le\frac{x}{K_{d}+1-d}-1\le\frac{x}{K_{d}+1}\le N_{d}.
\]
Therefore,
\[
T_{d,2}(x)=\sum_{K_{d}<k\le K_{d+1}-1}\int_{x/(k+1)}^{x/(k-d)-1}\left(\frac{x}{t(t+1)}-d\right)dt\]
\[=x\sum_{K_{d}<k\le K_{d+1}-1}\log\frac{1-(k-d)/x}{1+(k+1)/x}
-\sum_{K_{d}<k\le K_{d+1}-1}d\left(\frac{x}{k-d}-\frac{x}{k+1}-1\right).
\]
More explicitly, we obtain
\begin{equation}
\begin{aligned}
T_{d,2}(x)=&\;x\sum_{K_{d}<k\le K_{d+1}-1}\log\frac{1-(k-d)/x}{1+(k+1)/x}
+d\sum_{K_{d+1}-d-1<k\le K_{d+1}}\frac{x}{k}\\
&-d\sum_{K_{d}-d<k\le K_{d}+1}\frac{x}{k}
+d(K_{d+1}-K_{d}-1).
\end{aligned}
\tag{21}
\end{equation}

\subsubsection{Obtaining the expression of the sum $T_{d,3}(x)$}
Currently, the function $T_{d,3}(x)$ is defined through relation (18) for $i=3$, and is associated with the set defined in (17). Through this, and from (14), the interval of real variable $t$ is given by $\bigl(x/(K_{d}+1),x/K_{d}\bigr]$. Therefore,
\[
T_{d,3}(x)=\int_{x/K_{d}}^{x/(K_{d}+1)}\left|d-\frac{x}{t(t+1)}\right|dt
=\int_{N_{d}}^{x/(K_{d}+1)}\left(\frac{x}{t(t+1)}-d\right)dt
+\int_{x/K_{d}}^{N_{d}}\left(d-\frac{x}{t(t+1)}\right)dt.
\]
More precisely,
\begin{equation}
T_{d,3}(x)=x\log\frac{1+(K_{d}+1)/x}{1+K_{d}/x}
-2x\log\left(1+\frac{1}{N_{d}}\right)
-d\left(2N_{d}-\frac{x}{K_{d}+1}-\frac{x}{K_{d}}\right).
\tag{22}
\end{equation}

\subsubsection{Final expression and behavior of $T_{d}(x)$}
Adding (20), (21) and (22) and reducing, we get
\begin{equation}
\begin{aligned}
T_{d}(x)=&\;d\sum_{K_{d+1}-d-1<k\le K_{d+1}}\frac{x}{k}
-d\sum_{K_{d-1}-d+1<k\le K_{d-1}}\frac{x}{k}\\
&+x\sum_{K_{d-1}<k\le K_{d+1}}\log\frac{1+k/x}{1-(k-d)/x}\\
&-x\log\left(1-\frac{K_{d+1}-d}{x}\right)\\
&-x\log\left(1-\frac{K_{d-1}-d+1}{x}\right)\\
&+d(K_{d+1}-K_{d-1}-2N_{d}-2)
-2x\log\left(1+\frac{1}{N_{d}}\right).
\end{aligned}
\tag{23}
\end{equation}
Finding the behavior of the sum $T_{d}(x)$ relies on the following lemmas.

\begin{lemma}
For any integer $d>0$, and $K_{d}-d<k\le K_{d}$, we have
\[
\sum_{K_{d}-d<k\le K_{d}}\frac{x}{k}=\sqrt{dx}+O(d).
\]
\end{lemma}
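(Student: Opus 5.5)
We need to show $\sum_{K_d-d<k\le K_d} x/k=\sqrt{dx}+O(d)$. Each summand $x/k$ is compared with a single reference value, using the defining inequalities of $K_d$. By definition $K_d$ is the largest integer with $(K_d-d)K_d\le dx$, so
\[
(K_d-d)K_d\le dx<(K_d+1-d)(K_d+1).
\]
Writing $K_d=\frac{d+\sqrt{d^2+4dx}}{2}-\theta$ with $0\le\theta<1$ gives
\[
K_d=\sqrt{dx}+\frac d2+O\!\left(\frac{d^2}{\sqrt{dx}}\right)+O(1).
\]
So all $k$ in the range $K_d-d<k\le K_d$ lie in an interval of length $d$ centred (up to $O(1)$) at $\sqrt{dx}$. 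Since $K_d\ge d$ always (indeed $K_d\ge\sqrt{dx}$), the lower end satisfies $K_d-d>-1$. In the main case $d\le \sqrt{dx}$, i.e. $d\le x$, every $k$ is comparable to $\sqrt{dx}$ once we restrict to $d\le cx$ for a small constant $c$, which is the relevant range in the application.

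The key step is to pair the terms symmetrically. Set $m=K_d-d/2$, which is essentially the midpoint of the range, and write $k=m+j$ with $|j|\le d/2$. Expanding gives
\[
\frac xk=\frac xm\left(1-\frac jm+O\!\left(\frac{j^2}{m^2}\right)\right).
\]
Summing over the $d$ integers $k$, the linear terms in $j$ cancel up to $O(x/m\cdot 1/m)$, because the $j$'s are symmetric up to a shift of at most $1$. The quadratic terms contribute $O(xd^3/m^3)$. Hence
\[
\sum_{K_d-d<k\le K_d}\frac xk=\frac{dx}{m}+O\!\left(\frac{x}{m^2}\right)+O\!\left(\frac{xd^3}{m^3}\right).
\]

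Now use the sharper identity $m^2=(K_d-d/2)^2=(K_d-d)K_d+d^2/4$. Combined with the defining inequalities of $K_d$, this gives $m^2=dx+d^2/4+O(K_d)=dx+O(d^2+\sqrt{dx})$. Consequently
\[
\frac{dx}{m}=\sqrt{dx}\Bigl(1+O\!\left(\frac{d}{x}+\frac{1}{\sqrt{dx}}\right)\Bigr)=\sqrt{dx}+O\!\left(d^{3/2}x^{-1/2}+1\right).
\]
For $d\le x$ this error is $O(d)$. The remaining errors are also small: $x/m^2\asymp 1/d=O(d)$, and $xd^3/m^3\asymp d^{3/2}x^{-1/2}\le d$.

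I expect the main obstacle to be making the "$m\asymp\sqrt{dx}$" step uniform. One must handle the regime where $d$ is comparable to or larger than $x$; there the lower limit $K_d-d$ can be small, so the terms $x/k$ are not uniformly comparable. In that regime I would argue directly instead. The sum is at most $x\sum_{k\le K_d}1/k\ll x\log K_d$, and $K_d\le d+\sqrt{dx}+1\ll d$, so the sum is $O(x\log d)$. This bound needs checking against $O(d)$. If it fails, I would note that the lemma is only applied for $d\le D\ll x$, and restrict the statement to that range, where the argument above is clean.
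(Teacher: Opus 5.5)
For $d\le cx$ your argument is correct, and it is a heavier version of the paper's. The paper only uses $k=\sqrt{dx}+O(d)$ for every $k$ in the range, so $x/k=\sqrt{x/d}+O(1)$ termwise, and summing $d$ terms gives the $O(d)$ error. Your midpoint expansion with cancellation of the linear terms gives a sharper error, $O(1+d^{3/2}x^{-1/2})$, which the lemma does not need. There is one small slip. With $m=K_d-d/2$ the offsets are $j=i-d/2$ for $1\le i\le d$, so $\sum j=d/2$. The linear term is therefore $O(xd/m^2)=O(1)$, not $O(x/m^2)$, which is still within $O(d)$.

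The genuine gap is that you leave the regime $d\ge cx$ open, and the bound you propose there does not close it. Since $K_d\ge d$, the bound $x\log K_d$ is of order $d\log d$ when $d\asymp x$, not $O(d)$. It fails because it discards the lower end $K_d-d+1$ of the summation range.

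The missing observation is the maximality of $K_d$, which gives $(K_d-d+1)(K_d+1)>dx$.
\begin{itemize}
\item \emph{Upper bound.} Every term satisfies $x/k\le x/(K_d-d+1)<(K_d+1)/d$. Hence the sum is $<K_d+1\le d+\sqrt{dx}+1$, using $\sqrt{d^2+4dx}\le d+2\sqrt{dx}$.
\item \emph{Lower bound.} Every term satisfies $x/k\ge x/K_d\ge x/(d+\sqrt{dx})$. Hence the sum is at least $dx/(d+\sqrt{dx})\ge\sqrt{dx}-d$.
\end{itemize}
Together these put the sum within $d+1$ of $\sqrt{dx}$ for all $x>0$ and all $d\ge1$, with no case distinction at all.

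The paper's own proof has the same tacit restriction as yours: its step $x/(\sqrt{dx}+O(d))=\sqrt{x/d}+O(1)$ requires $d\ll x$. The lemma is also only applied for $d\le D=x^{2/15}$, so restricting the statement would not hurt the theorem. Still, to prove the lemma as stated, for every $d>0$, you need the extra argument above.
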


\begin{proof}
Note that if $K_{d}-d<k\le K_{d}$, we have $k=\sqrt{dx}+O(d)$ (see \cite[p.~8]{Balazard2017}). Therefore,
\[
\sum_{K_{d}-d<k\le K_{d}}\frac{x}{k}
=\sum_{K_{d}-d<k\le K_{d}}\frac{x}{\sqrt{dx}+O(d)}
=\sum_{K_{d}-d<k\le K_{d}}\left(\sqrt{\frac{x}{d}}+O(1)\right)
=\sqrt{dx}+O(d).\qedhere
\]
\end{proof}

\begin{lemma}
For any integer $d>0$, we have
\begin{equation}
x\sum_{K_{d-1}<k\le K_{d+1}}\log\frac{1+k/x}{1-(k-d)/x}
=\frac{(2d-1)\sqrt{d+1}-(2d+1)\sqrt{d-1}}{3}\sqrt{x}+O\!\left(d^{3/2}x^{-1/2}\right)+O(d).
\tag{24}
\end{equation}
\end{lemma}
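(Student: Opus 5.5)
Expand the logarithm in powers of $1/x$. For $K_{d-1}<k\le K_{d+1}$ we have $k\asymp\sqrt{dx}$ (for $d\ll x$), so $k/x$ and $(k-d)/x$ are small, and
\[
\log\frac{1+k/x}{1-(k-d)/x}=\frac{2k-d}{x}+\frac{k^{2}-(k-d)^{2}}{2x^{2}}\cdot(-1)+O\!\left(\frac{k^{3}}{x^{3}}\right)
=\frac{2k-d}{x}-\frac{2kd-d^{2}}{2x^{2}}+O\!\left(\frac{k^{3}}{x^{3}}\right).
\]
After multiplying by $x$, the main term is $\sum(2k-d)$. The second-order term contributes $O(dk/x)$ per term. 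The number of terms is $K_{d+1}-K_{d-1}\ll \sqrt{x/d}+1$, since $K_{d}=\tfrac{d+\sqrt{d^{2}+4dx}}{2}+O(1)$ and the derivative in $d$ is about $\sqrt{x/d}$. The total of this term is therefore $O(d\sqrt{dx}\sqrt{x/d}/x)=O(d)$. The cubic remainder contributes $O(\sqrt{x/d}\,(dx)^{3/2}/x^{2})=O(d\,x^{-1/2})$, which is absorbed in $O(d^{3/2}x^{-1/2})$.

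It remains to evaluate the elementary sum $\sum_{K_{d-1}<k\le K_{d+1}}(2k-d)$. With $A=K_{d-1}$ and $B=K_{d+1}$ this is exactly $B(B+1)-A(A+1)-d(B-A)$. Set $\kappa_{j}=\tfrac{j+\sqrt{j^{2}+4jx}}{2}$, so that $K_{j}=\kappa_{j}+O(1)$. Replacing $K_{j}$ by $\kappa_{j}$ changes $B^{2}-A^{2}$ by $O(\kappa)=O(\sqrt{dx})$. That is too large, so I will instead compute with the exact floors. The quantity $B^{2}+B-dB$ is a quadratic polynomial $g(B)$ in $B$, and with $\beta=\kappa_{d+1}$ it satisfies $g(\beta)=\beta^{2}-(d-1)\beta$. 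From $\beta^{2}-(d+1)\beta=(d+1)x$ we get $g(\beta)=(d+1)x+2\beta$. The floor error is then $g(B)-g(\beta)=O(\beta\cdot 1)$, which is again of size $\sqrt{dx}$. So the naive route fails, and this is the main obstacle.

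The way out is that the target main term $\tfrac{(2d-1)\sqrt{d+1}-(2d+1)\sqrt{d-1}}{3}\sqrt x$ is of size $\sqrt{x/d}$, not $x$. The $x$-terms must therefore cancel against the companion terms in (23) (the sums $d\sum x/k$ and the logarithms). The lemma as stated would then mean the author has already paired things differently. More likely, the author intends the sum to be treated by Euler–Maclaurin with $k$ as a continuous variable. In that reading one computes $x\int_{\kappa_{d-1}}^{\kappa_{d+1}}\log\frac{1+u/x}{1-(u-d)/x}\,du$, and the discrepancy from the floors is bounded by the endpoint values of the integrand times $O(1)$, i.e. $O(x\cdot k/x)$. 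I would therefore first check carefully whether the boundary terms, of size $\kappa_{d\pm1}$, are intended to cancel with the $-x\log(1-(K_{d+1}-d)/x)$ terms in (23).

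Concretely, I would follow the route through the integral. Write $F(u)=x\log\frac{1+u/x}{1-(u-d)/x}$. Euler–Maclaurin gives
\[
\sum_{A<k\le B}F(k)=\int_{A}^{B}F(u)\,du+O(F(B)).
\]
Expanding $F$ to third order and integrating between $\kappa_{d-1}$ and $\kappa_{d+1}$, I would use $\kappa_{j}^{2}=j\kappa_{j}+jx$ to reduce the powers of $\kappa_j$. The cubic term in the expansion, integrated, is what produces $\tfrac{1}{3}\bigl((2d-1)\sqrt{d+1}-(2d+1)\sqrt{d-1}\bigr)\sqrt x$ after the leading $x$-order pieces cancel through these identities. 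The errors collected along the way are $O(d^{3/2}x^{-1/2})$ and $O(d)$. The step I expect to be delicate is this exact algebraic cancellation, together with controlling the floor and endpoint errors at the $O(d)$ level. For those I would try the identity $K_{j}(K_{j}-j)\le jx<(K_{j}+1)(K_{j}+1-j)$ to bound each boundary discrepancy by $O(d)$ rather than $O(\sqrt{dx})$.
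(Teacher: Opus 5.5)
\textbf{There is a genuine gap, and (24) as printed cannot be proved.} Your plan rests on the claim that the $x$-order contributions cancel ``through these identities''; that does not happen. Your own computation already shows this. With $\alpha=\kappa_{d-1}$ and $\beta=\kappa_{d+1}$ you found $g(\beta)=(d+1)x+2\beta$, and in the same way $g(\alpha)=(d-1)x$. Hence
\[
\sum_{K_{d-1}<k\le K_{d+1}}(2k-d)=2x+O(\sqrt{dx}).
\]
Your Euler--Maclaurin variant gives the same thing: $\int_\alpha^\beta(2u-d)\,du=2x+\alpha+\beta$, and the relations $\kappa_j^2=j\kappa_j+jx$ leave the $2x$ intact. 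The cubic term cannot help either, since you yourself bounded its total contribution by $O(d)$; it cannot produce a term of order $\sqrt{x}$.

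Every summand equals $2k-d+O(dk/x+k^3/x^2)$, so for fixed $d$ the left side of (24) is $2x+O(\sqrt{x})$. The right side is $O(\sqrt{x})$; for large $d$ its coefficient is $\sim\tfrac16 d^{-3/2}$, not of size $d^{-1/2}$ as you wrote. So the statement is false as printed. The right conclusion from your calculation was that it contains a misprint, not that some hidden cancellation must exist.

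\textbf{What the statement should say.} The logarithm should be of a product, not a quotient. In general $\int_a^b\frac{x\,dt}{t(t+1)}=x\log\frac{b(a+1)}{a(b+1)}$. For example, with $a=x/k$ and $b=x/(k-d+1)-1$ this gives $x\log\bigl[(1+\tfrac kx)(1-\tfrac{k-d+1}{x})\bigr]$; the same slip already occurs in (20) and (21). For $x\log\bigl[(1+\tfrac kx)(1-\tfrac{k-d}{x})\bigr]$ the linear terms collapse to $d$, and the summand is
\[
d-\frac{k^2}{x}+\frac{kd}{x}-\frac{d^2}{2x}+O\!\left(\frac{k^3}{x^2}\right).
\]

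\textbf{How the paper argues.} This is essentially the paper's route: a two-term Taylor expansion with leading term $d$ per summand, then summation over $K_{d-1}<k\le K_{d+1}$ using (25) and Balazard's estimates for $K_j$. There is no Euler--Maclaurin step and no $x$-size cancellation. Term by term:
\begin{itemize}
\item the $d$-terms give $d(\sqrt{d+1}-\sqrt{d-1})\sqrt{x}+O(d)$;
\item the $k^2/x$-terms give $\frac{(d+1)^{3/2}-(d-1)^{3/2}}{3}\sqrt{x}+O(d)$;
\item the $kd/x$-terms are $O(d)$;
\item the $d^2/x$-terms are $O(d^{3/2}x^{-1/2})$, which is where that error term comes from;
\item the cubic remainder is $O(d)$.
\end{itemize}
Finally,
\[
d\bigl(\sqrt{d+1}-\sqrt{d-1}\bigr)-\frac{(d+1)^{3/2}-(d-1)^{3/2}}{3}=\frac{(2d-1)\sqrt{d+1}-(2d+1)\sqrt{d-1}}{3},
\]
which is exactly the coefficient in (24). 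The paper's displayed summand ``$d+k^2/x+\cdots$'' has a sign slip, but its final coefficient is that of $\sum(d-k^2/x)$.

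The missing idea in your proposal is to recognise the quotient as a misprint for the product. Once that is done, the lemma is a routine expansion.
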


\begin{proof}
On the one hand, we have
\[
\sum_{K_{d-1}<k\le K_{d+1}}\log\left(1+\frac{k}{x}\right)
=\sum_{K_{d-1}<k\le K_{d+1}}\left(\frac{k}{x}-\frac{k^{2}}{2x^{2}}+O\!\left(\frac{k^{3}}{x^{3}}\right)\right),
\]
and on the other hand, we have
\[
\begin{aligned}
\sum_{K_{d-1}<k\le K_{d+1}}\log\left(1-\frac{k-d}{x}\right)
&=\sum_{K_{d-1}<k\le K_{d+1}}\left(-\frac{k-d}{x}-\frac{(k-d)^{2}}{2x^{2}}+O\!\left(\frac{k^{3}}{x^{3}}\right)\right)\\
&=\sum_{K_{d-1}<k\le K_{d+1}}\left(-\frac{k}{x}+\frac{d}{x}-\frac{k^{2}}{2x^{2}}+\frac{d^{2}-2kd}{2x^{2}}+O\!\left(\frac{k^{3}}{x^{3}}\right)\right).
\end{aligned}
\]
We now use the fact that
\[
K_{d+1}-K_{d-1}=\bigl(\sqrt{d+1}-\sqrt{d-1}\bigr)\sqrt{x}+O(1)
\quad\text{(see \cite[p.~8]{Balazard2017})},
\tag{25}
\]
which leads to
\[
K_{d+1}-K_{d-1}\ll\sqrt{x/d},
\qquad
\sum_{K_{d-1}<k\le K_{d+1}}k^{3}\ll\sqrt{x/d}\cdot(dx)^{3/2}=dx^{2}.
\]
Hence, we conclude that
\[
x\sum_{K_{d-1}<k\le K_{d+1}}\log\frac{1+k/x}{1-(k-d)/x}
=\sum_{K_{d-1}<k\le K_{d+1}}\left(d+\frac{k^{2}}{x}+\frac{d^{2}-2kd}{2x}+O\!\left(\frac{k^{3}}{x^{2}}\right)\right).
\]
Then, according to propositions~3 of \cite[pp.~8--9]{Balazard2017}, we obtain (24).
\end{proof}

\begin{lemma}
For any integer $d>0$ and $x>0$, we have
\[
T_{d}(x)=f(d)\sqrt{x}+O\!\left(d^{2}\right),
\]
where
\[
f(d)=\frac{(8d+2)\sqrt{d+1}-(8d-2)\sqrt{d-1}}{3}-\frac{4}{\sqrt{d}}.
\]
\end{lemma}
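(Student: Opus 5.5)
We start from the exact expression (23) for $T_d(x)$ and estimate it line by line, collecting the main terms (multiples of $\sqrt{x}$) and pushing everything else into an error of size $O(d^2)$. We may assume $x\ge d$: otherwise $\sqrt{x}\,d^{3/2}\ll d^2$, and $T_d(x)$ is trivially controlled by crude bounds on the lengths of the intervals $I(k-d,k;x)$ and on the integrand.

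\emph{The two harmonic-type sums.} The first two sums in (23) have the shape of Lemma 2.2, but with $K_{d\pm1}$ in place of $K_d$ and with the length of the range shifted by one. We first compare with $\sum_{K_{d+1}-(d+1)<k\le K_{d+1}} x/k$, which equals $\sqrt{(d+1)x}+O(d)$ by Lemma 2.2 applied to $d+1$. This sum has the range length required there, so the first sum is covered exactly, and multiplying by $d$ gives $d\sqrt{(d+1)x}+O(d^2)$. For the second sum the range has length $d-1$, so Lemma 2.2 applied to $d-1$ gives $\sqrt{(d-1)x}+O(d)$, contributing $-d\sqrt{(d-1)x}+O(d^2)$. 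Any mismatch of a single term $x/k$ with $k\asymp\sqrt{dx}$ costs $\sqrt{x/d}$, and multiplied by $d$ this is $\sqrt{dx}$. That is not negligible, so the boundary indices have to be matched exactly; this is the bookkeeping step we expect to be most delicate.

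\emph{The logarithmic sum and the lone logarithms.} The logarithmic sum is given directly by Lemma 2.3, equation (24); its error $O(d^{3/2}x^{-1/2})+O(d)$ is $O(d^2)$ for $x\ge 1$. For the two isolated terms $-x\log(1-(K_{d\pm1}-d)/x)$, we expand $-x\log(1-u/x)=u+O(u^2/x)$ with $u=K_{d\pm1}-d+O(1)=\sqrt{(d\pm1)x}+O(d)$. The remainder is $O(d)$, so together these two terms give $\bigl(\sqrt{d+1}+\sqrt{d-1}\bigr)\sqrt{x}+O(d)$.

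\emph{The last line.} From (25) and the analogous statement $K_{d\pm1}=\sqrt{(d\pm1)x}+O(d)$, we get
\[
d(K_{d+1}-K_{d-1}-2)=d\bigl(\sqrt{d+1}-\sqrt{d-1}\bigr)\sqrt{x}+O(d).
\]
Also $N_d=\sqrt{x/d}+O(1)$, so $-2dN_d=-2\sqrt{dx}+O(d)$, and
\[
-2x\log(1+1/N_d)=-2x/N_d+O(x/N_d^2)=-2\sqrt{dx}+O(d).
\]
These two $-2\sqrt{dx}$ terms account for the $-4/\sqrt d$ in $f(d)$ only after we use the identity $\sqrt d=d/\sqrt d$ and regroup. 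We would verify this carefully.

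\emph{Assembling.} Finally we add the $\sqrt{x}$ coefficients:
\[
d\sqrt{d+1}-d\sqrt{d-1}+\tfrac{(2d-1)\sqrt{d+1}-(2d+1)\sqrt{d-1}}{3}+\sqrt{d+1}+\sqrt{d-1}+d(\sqrt{d+1}-\sqrt{d-1})-4\sqrt d ,
\]
and check that this matches $f(d)$ up to the normalization of the $\sqrt d$ terms. The main obstacle is exactly this reconciliation. Our plan is to recheck the sign and boundary conventions in (23), namely the ranges of the harmonic sums and the $-x\log$ terms, until the coefficients agree with $(8d+2)\sqrt{d+1}/3-(8d-2)\sqrt{d-1}/3-4/\sqrt d$. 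If a discrepancy persists, we would trace it back through (20)–(22) rather than adjust constants.
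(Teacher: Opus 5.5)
You follow the paper's own route. You start from (23), use Lemma 2.2 at $d+1$ and $d-1$ for the two harmonic sums, (24) for the logarithmic sum, a first-order expansion for the two boundary logarithms, and (25) with $N_d=\sqrt{x/d}+O(1)$ for the last line. These estimates are all fine. The index matching you call delicate is immediate: the two ranges in (23) contain exactly $d+1$ and $d-1$ integers, ending at $K_{d+1}$ and $K_{d-1}$. So Lemma 2.2 applies verbatim and gives $d\bigl(\sqrt{d+1}-\sqrt{d-1}\bigr)\sqrt{x}+O(d^2)$. That value is correct; the $2d\sqrt{dx}$ printed in the paper's (26) is not what Lemma 2.2 yields.

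The genuine gap is the final step. You leave it open, and the fix you suggest cannot work. The terms $-2dN_d-2x\log(1+1/N_d)$ contribute $-2\sqrt{dx}-2\sqrt{dx}=-4\sqrt{d}\,\sqrt{x}+O(d)$. No regrouping via $\sqrt d=d/\sqrt d$ turns this into $-4\sqrt{x}/\sqrt{d}$. If you simply add your coefficients:
the coefficient of $\sqrt{d+1}$ is $d+\frac{2d-1}{3}+1+d=\frac{8d+2}{3}$;
the coefficient of $\sqrt{d-1}$ is $-d-\frac{2d+1}{3}+1-d=-\frac{8d-2}{3}$;
and $-4\sqrt d$ remains.
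So your argument proves the lemma with $f(d)=\frac{(8d+2)\sqrt{d+1}-(8d-2)\sqrt{d-1}}{3}-4\sqrt{d}$, not with $-\frac{4}{\sqrt d}$.

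That corrected version is the true statement, and the $-\frac{4}{\sqrt d}$ is a misprint. What your proposal lacks is recognizing this, not finding an upstream correction. The evidence:
the paper's own (28) ends with $-4\sqrt d\,\sqrt x$;
for $d\ge 2$ the two versions differ by $4\bigl(\sqrt d-1/\sqrt d\bigr)\sqrt x$, which is not $O(d^2)$ uniformly in $x$, so the lemma as printed is false;
with $-4/\sqrt d$ one would have $f(d)\sim 4\sqrt d$, so $\sum_d f(d)$ would diverge, contradicting its use in the proof of Theorem 1;
with $-4\sqrt d$, by contrast, $f(d)=\frac12\int_{d-1}^{d+1}\{u\}(1-\{u\})u^{-3/2}\,du\sim\frac16 d^{-3/2}$ and $\sum_{d\ge1}f(d)=\frac{2}{\pi}\zeta(3/2)-\frac23$, as that proof requires.

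Tracing back through (20)--(22) will not move the $-4\sqrt d$. You would find misprints there, but of a different kind. The range in (20) should be $K_{d-1}<k\le K_d-1$. The logarithms in (20)--(23) should be of products, e.g.\ $\log\bigl[(1+k/x)(1-(k-d)/x)\bigr]$ in (23). Read literally, $x\log\frac{1+k/x}{1-(k-d)/x}=2k-d+O(k^2/x)$ would make that sum of order $x$. Formula (24) is the correct evaluation of the product form, so the coefficients you used stand.
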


\begin{proof}
We apply Lemma~2, by first changing $d$ to $d+1$, then $d$ to $d-1$, and obtain
\begin{equation}
d\sum_{K_{d+1}-d-1<k\le K_{d+1}}\frac{x}{k}
-d\sum_{K_{d-1}-d+1<k\le K_{d-1}}\frac{x}{k}
=2d\sqrt{dx}+O\!\left(d^{2}\right).
\tag{26}
\end{equation}
For the penultimate expression of (23), we have
\begin{equation}
-x\log\left(1-\frac{K_{d+1}-d}{x}\right)
-x\log\left(1-\frac{K_{d-1}-d+1}{x}\right)
=\bigl(\sqrt{d+1}+\sqrt{d-1}\bigr)\sqrt{x}+O(d).
\tag{27}
\end{equation}
For the last expression of (23), using (25), and the fact that
\[
\frac{K_{d}}{x}\le\frac{1}{N_{d}}\le\frac{K_{d}+1}{x}
\quad\text{and}\quad
N_{d}\le\sqrt{x/d},
\]
we find
\begin{equation}
\begin{aligned}
&d(K_{d+1}-K_{d-1}-2N_{d}-2)-2x\log\left(1+\frac{1}{N_{d}}\right)\\
&\qquad=d\bigl(\sqrt{d+1}-\sqrt{d-1}-2/\sqrt{d}\bigr)\sqrt{x}+O(d)
-2x\left(\sqrt{\frac{d}{x}}+O\!\left(\frac{d}{x}\right)\right)\\
&\qquad=d\bigl(\sqrt{d+1}-\sqrt{d-1}\bigr)\sqrt{x}-4\sqrt{d}\,\sqrt{x}+O(d).
\end{aligned}
\tag{28}
\end{equation}
By inserting (24), (26), (27) and (28) in (23), we obtain
\[
T_{d}(x)=f(d)\sqrt{x}+O\!\left(d^{2}\right),
\]
where
\[
f(d)=\frac{(8d+2)\sqrt{d+1}-(8d-2)\sqrt{d-1}}{3}-\frac{4}{\sqrt{d}}.\qedhere
\]
\end{proof}

\subsection{Exploring the behavior of the sum $\displaystyle\sum_{d>D}T_{d}(x)$}

\begin{lemma}
Let $D$ be a large positive real number. We have
\[
\sum_{d>D}T_{d}(x)\le\sqrt{x/(D-1)}.
\]
\end{lemma}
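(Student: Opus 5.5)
The plan is to avoid the explicit formula (23) altogether and go back to the definition (7). Recall that $T_d(x)$ collects the contribution of those $t>0$ for which
\[
\left\lfloor\frac{x}{t}\right\rfloor-\left\lfloor\frac{x}{t+1}\right\rfloor=d .
\]
For such $t$ the integrand equals $\bigl|\{x/t\}-\{x/(t+1)\}\bigr|\le 1$. The whole argument then reduces to locating the set of $t$ where this difference can be large.

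The first step is the elementary identity
\[
\left\lfloor\frac{x}{t}\right\rfloor-\left\lfloor\frac{x}{t+1}\right\rfloor=\frac{x}{t(t+1)}+\left\{\frac{x}{t+1}\right\}-\left\{\frac{x}{t}\right\}.
\]
The fractional part terms lie in $(-1,1)$, so the difference $d$ satisfies $d<\frac{x}{t(t+1)}+1$. If $d>D$, this gives
\[
t(t+1)<\frac{x}{d-1}<\frac{x}{D-1},\qquad\text{so}\qquad t<N_{D-1}<\sqrt{\frac{x}{D-1}} .
\]
Here we use the definition of $N_d$ with $d$ replaced by $D-1$, together with $N_{D-1}\le\sqrt{x/(D-1)}$, since $N_{d}(N_d+1)=x/d$. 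Thus every $t$ contributing to $\sum_{d>D}T_d(x)$ lies in the interval $\bigl(0,\sqrt{x/(D-1)}\bigr)$.

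The second step uses that the sets $\{t : \lfloor x/t\rfloor-\lfloor x/(t+1)\rfloor=d\}$ are pairwise disjoint for distinct $d$. Indeed, they are exactly the unions of the intervals $I(k-d,k;x)$ over $E_d$. Therefore
\[
\sum_{d>D}T_d(x)=\int_{\{t:\ \lfloor x/t\rfloor-\lfloor x/(t+1)\rfloor>D\}}\left|\left\{\frac{x}{t}\right\}-\left\{\frac{x}{t+1}\right\}\right|dt\le\int_0^{\sqrt{x/(D-1)}}1\,dt=\sqrt{\frac{x}{D-1}} ,
\]
which is the claim. Note that we only need $D>1$, so "large" is harmless.

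There is no serious obstacle. The only points needing care are the strict versus non-strict inequalities in the bound on $t$, and remembering to bound the integrand by $|\{\cdot\}-\{\cdot\}|<1$ rather than by $|d-x/(t(t+1))|$ directly; the latter is the same quantity, but the bound by $1$ is what is transparent.

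If one prefers to stay within the paper's notation, the same argument can be phrased as follows. For $(k-d,k)\in E_d$ with $d>D$, the interval $I(k-d,k;x)$ is contained in $(0,x/(k+1)]$ with $k\ge K_d$. One then sums the lengths of these intervals, using that they are disjoint in $t$. The direct argument above is cleaner, however.
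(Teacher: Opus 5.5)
Your proof is correct and is essentially the paper's argument. From $x/(t(t+1)) = d+\{x/t\}-\{x/(t+1)\} > D-1$, every contributing $t$ lies in $\bigl(0,\sqrt{x/(D-1)}\bigr)$; the integrand $\bigl|\{x/t\}-\{x/(t+1)\}\bigr|\le 1$ is then integrated over that interval, using the disjointness of the level sets in $d$, which the paper leaves implicit. Your only cosmetic departure is the intermediate bound $t<N_{D-1}$, where the paper uses $x/t^{2}>x/(t(t+1))$ directly.
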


\begin{proof}
Let $D$ be a real number greater than $1$. If
\[
d=\left\lfloor\frac{x}{t}\right\rfloor-\left\lfloor\frac{x}{t+1}\right\rfloor>D,
\]
then it follows that
\[
\frac{x}{t^{2}}>\frac{x}{t(t+1)}=d+\left\{\frac{x}{t}\right\}-\left\{\frac{x}{t+1}\right\}>D-1.
\]
This implies the inequality $t<\sqrt{x/(D-1)}$.

As a result, the corresponding sum satisfies the bound
\[
\sum_{d>D}T_{d}(x)\le\int_{0}^{\sqrt{x/(D-1)}}dt=\sqrt{x/(D-1)}.\qedhere
\]
\end{proof}

\subsection{Proof of Theorem 1}

For $x>1$ and $D\ge2$, we have
\[
T(x)=\sum_{i=0}^{2}T_{i}(x)=T_{0}(x)+\sum_{1\le d\le D}T_{d}(x)+\sum_{d>D}T_{d}(x).
\]
Building on the results established in Lemmas~1,~4, and~5, it follows that
\[
T(x)=\frac{2}{3}\sqrt{x}+O(1)+\sum_{1\le d\le D}\bigl(f(d)\sqrt{x}+O(d^{2})\bigr)+O\!\left(\sqrt{x/D}\right).
\]
We observe that
\[
\sum_{1\le d\le D}f(d)=\sum_{d=1}^{\infty}f(d)-\sum_{d>D}f(d),
\]
where the series $\sum_{d=1}^{\infty}f(d)$ is convergent and satisfies
\[
\sum_{d=1}^{\infty}f(d)=-\frac{2}{3}+\frac{2}{\pi}\zeta(3/2).
\]
Thus, we obtain
\[
T(x)=\left(\frac{2}{3}+\sum_{d=1}^{\infty}f(d)\right)\sqrt{x}+O(D^{3})+O\!\left(\sqrt{x/D}\right),
\]
which simplifies to
\[
T(x)=\frac{2}{\pi}\zeta(3/2)\sqrt{x}+O(D^{3})+O\!\left(\sqrt{x/D}\right).
\]
Finally, the result is obtained by setting $D=x^{2/15}$.

\begin{acknowledgment}
The first author wishes to express sincere gratitude to Professor Michel Balazard for his valuable suggestions regarding the study of the integral introduced in the title.
\end{acknowledgment}

\end{document}